\documentclass[12pt,a4paper]{amsart}
\usepackage{amsmath,amsfonts,amsthm,amsopn,color,amssymb,enumitem}
\usepackage[a4paper]{geometry}
\usepackage{palatino}
\usepackage{graphicx}
\usepackage[colorlinks=true]{hyperref}
\usepackage{cancel}
\hypersetup{urlcolor=blue, citecolor=red, linkcolor=blue}
\allowdisplaybreaks
\usepackage{cite}
\usepackage{relsize}
\usepackage{esint}
\usepackage{verbatim}
\usepackage{mathrsfs}
\usepackage{xcolor}
\usepackage{xfrac}

\newcommand{\e}{\varepsilon}

\newcommand{\R}{\mathbb{R}}

\newcommand{\Sph}{{\mathbb{S}}}

\newcommand{\Id}{\operatorname{Id}}

\newcommand{\grad}{\nabla}
\newcommand{\la}{\langle}
\newcommand{\ra}{\rangle}
\newcommand{\p}{\partial}
\newcommand{\Sphplus}{\Sph^{4}_{+}}

\newcommand{\de}{\partial}

\newtheorem{theorem}{Theorem}[section]
\newtheorem{proposition}[theorem]{Proposition}
\newtheorem{lemma}[theorem]{Lemma}
\newtheorem{corollary}[theorem]{Corollary}
\theoremstyle{definition}
\newtheorem{definition}[theorem]{Definition}

\title
[Kazdan--Warner obstructions for a 4$th$ order-problem]{Kazdan--Warner obstructions for a 4$th-$order boundary problem}

\author{Sergio Cruz-Bl\'azquez}
\address{Sergio Cruz Bl\'azquez, Departamento de Matemáticas, Universidad Autónoma de Madrid, Calle de Darwin 2, 28049 Madrid (Spain)}
\email{sergio.cruz@uam.es}
\author{Azahara DeLaTorre}
\address{Azahara DeLaTorre, Dipartimento di Matematica Guido Castelnuovo, Sapienza Universit\`a di Roma, Piazzale Aldo Moro 5, 00185 Roma (Italy)}
\email{azahara.delatorrepedraza@uniroma1.it}

\subjclass{Primary: 35J30; Secondary: 35J60, 53C18, 58J32}
\keywords{Kazdan--Warner identity, conformal geometry, Paneitz operator, $Q$ curvature, $T$ curvature, hemisphere}

\begin{document}
\maketitle

\begin{abstract}
We derive Kazdan--Warner type identities for the boundary problem of prescribing nonconstant interior $Q$ curvature and boundary $T$ curvature on the upper hemisphere $\Sphplus$ by a conformal change of the standard metric. Using the natural variational formulation and conformal variations generated by boundary-preserving conformal vector fields, we obtain nontrivial integral obstructions to solvability.
\end{abstract}

\section{Introduction} \label{s:intro}
Let $(\Sigma,g)$ be a compact surface and let $\tilde g=e^{2u}g$ be a conformal metric. The classical
\emph{Kazdan--Warner problem} asks which functions $K$ can arise as the Gaussian curvature of $\tilde g$.
Analytically, $u$ solves the Liouville-type equation
\begin{equation}\label{eq:intro-Liouville}
	-\Delta_g u + K_g = K\,e^{2u}\qquad \text{in }\Sigma,
\end{equation}
where $K_{g}$ denotes the Gaussian curvature of $\Sigma$ with respect to $g$. The Gauss--Bonnet theorem shows that the equation \eqref{eq:intro-Liouville} does not always admit a solution, since a topological constraint must hold, namely, $$\int_{\Sigma} K\,e^{2u}\,dV_g=2\pi\chi(\Sigma).$$
On the round sphere $(\mathbb S^2,g_0)$, this problem is known as the Nirenberg's problem, whose difficulty is intimately
related to the noncompactness of the group of conformal transformations of the sphere.
One important feature of this setting is the existence of nontrivial integral obstructions to solvability, as observed by Kazdan and Warner in \cite{KazdanWarner74,KazdanWarner75} by differentiating the Euler--Lagrange functional along conformal flows. A particularly interesting case arises when taking the gradients of the first spherical harmonics as conformal vector fields, as highlighted by Chang and Yang in \cite{ChangYangS2}. More precisely, if $u$ solves
$\Delta_{g_0}u+K\,e^{2u}=1$ on $\mathbb S^2$, then one has the explicit obstruction
\begin{equation*}
	\int_{\mathbb S^2}\big\langle \nabla_{g_0}K,\nabla_{g_0}x_j\big\rangle\,e^{2u}\,dV_{g_0}=0,
	\qquad j=1,2,3,
\end{equation*}
where $x_j$ are the coordinate functions of the embedding $\mathbb S^2\subset\R^3$. In particular, monotonicity of $K$ along a conformal direction prevents existence of solutions.

\medskip
A natural boundary analogue for \eqref{eq:intro-Liouville} is obtained by prescribing, besides the interior curvature, the geodesic curvature
of $\partial\Sigma$ to be equal to a given function $h$. If $\Sigma$ has boundary $\partial \Sigma$ with geodesic curvature $h_g$ and $\tilde g=e^{2u}g$, this problem reduces to solving the boundary problem
\begin{equation*}
		-\Delta_gu + K_g = K\,e^{2u} \quad \text{in }\Sigma,\qquad
		\frac{\partial u}{\partial \nu} + h_g = h\,e^{u} \quad \text{on }\partial\Sigma,
\end{equation*}
where $\nu$ denotes the external unit normal vector. The case of the upper hemisphere $\Sigma = \Sph^2_+$ with the standard round metric $g_0$ is especially delicate, again due to the noncompact action of its conformal group. In this case, we arrive at the following Liouville-type boundary problem
\begin{equation}\label{eq:disco}
	\left\{
	\begin{aligned}
		-\Delta u +1&= K\,e^{2u} && \text{in }\mathbb S^{2}_+,\\
		\frac{\partial u}{\partial \nu} &= h\,e^{u} && \text{on }\mathbb S^1,
	\end{aligned}
	\right.
\end{equation}
subject to the Gauss--Bonnet constraint 
\begin{equation*}
	\int_{\mathbb S^2_+} K\,e^{2u}\,dV_{g_0}
	\;+\;
	\int_{\mathbb S^1} h\,e^{u}\,ds_{g_0}
	\;=\;2\pi.
\end{equation*}

This problem with nonconstant $K$ and $h$ from the point of view of the disk has been studied in \cite{CBRuiz, ruiz24, struwe}, and more recently in \cite{LSRRS}.

\medskip
As in the case of the whole sphere, further obstructions to the existence of solutions in the form of Kazdan--Warner identities persist in the hemisphere. More precisely, Hamza (\cite{Hamza}) showed, via a careful integration by parts, that if $u$ solves \eqref{eq:disco}, then 
\begin{equation}\label{eq:HamzaKW}
	\int_{\mathbb S^2_+} e^{2u}\,\langle \nabla K,\nabla F\rangle\,dV_{g_0}
	\;+\;
	4\int_{\mathbb S^1} e^{u}\,\langle \nabla h,\nabla F\rangle\,ds_{g_0}
	\;=\;0 .
\end{equation}
for every $F$ in the span of the restrictions of the coordinate functions. In particular, \eqref{eq:HamzaKW} yields a simple nonexistence criterion: if there exists such an $F$
for which $\langle \nabla K,\nabla F\rangle$ and $\langle \nabla h,\nabla F\rangle$ have the same strict
sign, then the problem admits no solution.

\medskip
If $(M,g)$ is a closed Riemannian manifold of dimension four, Branson's $Q$-curvature and the conformally covariant Paneitz operator $P_g^4$ provide a natural analogue of the Gaussian curvature in the context of conformal geometry. Under the change of the metric $\tilde g=e^{2u}g$, they satisfy the transformation law
\begin{equation}\label{eq:Q}
P_g^4u+2Q_g = 2Q_{\tilde g}\,e^{4u}\qquad \text{in }M,
\end{equation}
see \cite{Branson,Paneitz}. In view of \eqref{eq:Q}, one could address the existence of conformal metrics $\tilde g$ with prescribed $Q$-curvature equal to a given function $Q$ (see \cite{brendle2003, djadli-malchiodi-Q1, ChangYangAnnals95}). In this regard,
the case of the sphere $\Sph^4$ is again particularly challenging and so far only the partial results obtained in \cite{wei-xu1998, malchiodi-struwe2006,LiLiLiu} for $Q>0$ are available. Notably, this case displays the same obstruction mechanism as in the two-dimensional Nirenberg problem. Indeed, Chang and Yang pointed out in \cite{ChangYangAnnals95} that, if $u$ solves
\[
P_{g_0}^4u+2Q_{g_0}=2Q\,e^{4u}\qquad \text{on }\mathbb S^4,
\]
then for the coordinate functions $x_j$ on $\mathbb S^4\subset\mathbb R^5$ one has
\begin{equation}\label{eq:KW-S4}
	\int_{\mathbb S^4}\big\langle \nabla_{g_0}Q,\nabla_{g_0}x_j\big\rangle\,e^{4u}\,dV_{g_0}=0,
	\qquad j=1,\dots,5,
\end{equation}
which again yields immediate nonexistence criteria under suitable monotonicity of $Q$ along the Euclidean directions.

\medskip
If $M$ has boundary, Chang and Qing (\cite{ChangQingI,ChangQingII}) introduced a third order boundary operator $P_g^3$
and the corresponding boundary curvature $T_g$, so that
\begin{equation}\label{eq:intro-QT}
	P_g^4u+2Q_g = 2Q_{\tilde g}\,e^{4u}\quad \text{in }M,
	\qquad
	P_g^3u+T_g = T_{\tilde g}\,e^{3u}\quad \text{on }\partial M.
\end{equation}
Together with the Gauss--Bonnet--Chern formula, \eqref{eq:intro-QT} suggests that the pair $(Q_g,T_g)$
is the natural four-dimensional counterpart of $(K_g,h_g)$ on surfaces.

\medskip
If one aims to prescribe $Q$ and $T$ curvatures on $M$ and $\de M$, one needs to solve \eqref{eq:intro-QT} with $Q_{\tilde g}=Q$ and $T_{\tilde g}=T$. For the sake of solvability, it is natural to impose an additional first order boundary condition. In order to maintain the conformal nature of the problem, one natural condition to work with is to impose that $(M,e^{2u}g)$ has minimal boundary, that is,
$$\frac{\partial u}{\partial \nu} + \frac{1}{3}H_gu = 0 \quad \mbox{on } \partial M,$$
where $\nu$ denotes the unit normal vector pointing outwards.

\smallskip

In general terms, this problem has not been widely addressed, and the available results cover mostly the particular cases in which both curvatures are constant, and one of them is identically equal to zero, see \cite{ChangQingII,ndiaye2008, ndiaye2009, ndiaye2011}. Moreover, all these results rule out the manifolds that are conformally equivalent to the hemisphere. The latter case, including nonconstant curvatures, was recently addressed in \cite{CruzDeLaTorrePreprint}.

\medskip
The present work is precisely concerned with the model case of the upper hemisphere
$\Sphplus\subset\mathbb R^5$ endowed with the standard metric $g$, for which $\partial\Sphplus=\mathbb S^3$ is totally geodesic. In this setting we have
$P_g^4=\Delta_g^2-2\Delta_g$ and $Q_g= 3$. Moreover, the boundary is totally geodesic, which means the second fundamental form $\mathbb{I}_g$ vanishes everywhere on $\Sph^3$. This gives $T_g= H_g = 0$ and, for functions satisfying
$\partial_\nu u=0$, one has $P_g^3u=-\frac12\,\partial_\nu(\Delta_g u)$.
Therefore, prescribing smooth functions $Q$ on $\Sphplus$ and $T$ on $\mathbb S^3$ as the $Q$ and $T$ curvatures
of $\tilde g=e^{2u}g$ leads to the boundary problem

\medskip
\begin{equation}\label{eq:P}\tag{\textcolor{red}{QT}}
	\left\lbrace
	\begin{array}{rll}
		\Delta_g^{2}u-2\Delta_g u+6 &= 2Q\,e^{4u} & \text{in }\Sphplus,\\[5pt]
		-\dfrac{\de \Delta_g u}{\de \nu} &= 2T\,e^{3u} & \text{on }\mathbb S^3,\\[8pt]
		\dfrac{\de u}{\de \nu} &=0 & \text{on }\mathbb S^3,
	\end{array}\right.
\end{equation}
where $\nu = -x_5$, together with the Gauss--Bonnet--Chern constraint
\begin{equation*}
	\int_{\Sphplus} Q\,e^{4u}\,dV_g+\int_{\mathbb S^3} T\,e^{3u}\,dS_g = 4\pi^2.
\end{equation*}
In \cite{CruzDeLaTorrePreprint} we initiated the study of \eqref{eq:P} from the variational point of view,
deriving existence results under symmetry assumptions by exploiting a mean-field type reformulation and
sharp higher order Moser--Trudinger inequalities on $\Sphplus$.

\medskip
This paper goes in the opposite direction: we investigate \emph{nonexistence phenomena} for \eqref{eq:P} by
deriving Kazdan--Warner type identities in the spirit of \eqref{eq:HamzaKW} and \eqref{eq:KW-S4}.
The proof follows the classical strategy of differentiating the energy along conformal orbits of $u$ generated by
boundary-preserving conformal vector fields, as done in \cite{ChangYangS2}.
More precisely, if $\{\phi_t\}$ is a conformal flow on $\Sphplus$ preserving the boundary and
$\phi_t^*g=e^{2P_t}g$ for some smooth function $P_t$ on $\Sph^4_+$, we test the Euler--Lagrange functional along the curve
\[
u_t=u\circ\phi_t+P_t.
\]
However, the fourth-order nature of \eqref{eq:P} requires establishing a cocycle identity for the quadratic part of the energy induced by $P^4_g$ and $P^3_g$. Differentiating at $t=0$ and using this identity together with conformal covariance yields an integral relation linking $X(Q)$ and $X(T)$, where $X$ is the infinitesimal generator of $\phi_t$. Our main result reads as follows:
\begin{theorem}\label{thm:KW}
	Let $u\in\mathcal{H}$ be a solution to \eqref{eq:P}. Then for every boundary-preserving conformal vector field $X$ on $\Sphplus$,
	\begin{equation}\label{eq:KW}
		\int_{\Sphplus}X(Q)e^{4u}\,dV_{g}+\frac{4}{3}\int_{\Sph^{3}}X(T)e^{3u}\,ds_{g}=0.
	\end{equation}
\end{theorem}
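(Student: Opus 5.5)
The plan is to follow the Chang--Yang strategy sketched in the introduction. I will write the energy associated with \eqref{eq:P} as
\[
J(u)=\frac12\mathcal E(u)+6\int_{\Sphplus}u\,dV_g-\frac{1}{2}\,\log\Big(\cdots\Big)
\]
or, more simply, I will work directly with the unnormalized functional
\[
I(u)=\frac12\mathcal E(u)+6\int_{\Sphplus}u\,dV_g-\frac12\int_{\Sphplus}Qe^{4u}\,dV_g-\frac23\int_{\Sph^3}Te^{3u}\,dS_g .
\]
Here $\mathcal E(u)=\int_{\Sphplus}\big((\Delta_g u)^2+2|\nabla u|^2\big)dV_g$ is the quadratic form of $P^4_g$ together with the boundary operator. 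After integration by parts and using $\partial_\nu u=0$, one has $\int P^4_g u\cdot v+2\int_{\Sph^3}P^3_g u\,v=\int(\Delta u\Delta v+2\nabla u\nabla v)$. I will check that critical points of $I$ on $\mathcal H$ are exactly the solutions of \eqref{eq:P}. The coefficients $\frac12$ and $\frac23$ are fixed so that the first variation gives $2Qe^{4u}$ and $2Te^{3u}$. I then fix a boundary-preserving conformal flow $\phi_t$ generated by $X$, with $\phi_t^*g=e^{2P_t}g$, set $u_t=u\circ\phi_t+P_t$, and compute $\frac{d}{dt}I(u_t)|_{t=0}$ in two ways.

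First, since $u$ is a critical point, $\frac{d}{dt}I(u_t)|_{t=0}=dI(u)[X(u)+\dot P_0]=0$, provided $\dot u_0=X(u)+\dot P_0\in\mathcal H$. The Neumann condition is preserved because $X$ is tangent to $\Sph^3$, and $\Sph^3$ being totally geodesic and fixed by $\phi_t$ gives $\partial_\nu P_t=0$.

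Second, I will show that the non-curvature part $\frac12\mathcal E(u_t)+6\int u_t$ is independent of $t$. This is the cocycle identity: by conformal covariance, $P^4_g P_t+6=6e^{4P_t}$ and $P^3_gP_t=0$, since $\phi_t^*g$ is round with $Q=3$ and $T=0$. Expanding $\mathcal E(u\circ\phi_t+P_t)$ and using the invariance $\mathcal E_g(u\circ\phi_t)$ under pull-back by the conformal covariance of $(P^4,P^3)$ in dimension four, I expect
\[
\tfrac12\mathcal E(u_t)+6\!\int u_t=\tfrac12\mathcal E(u)+6\!\int u+\Big[\tfrac12\mathcal E(P_t)+6\!\int P_t\Big],
\]
where the last bracket vanishes identically in $t$ (it is $0$ at $t=0$, and its derivative is $\int(P^4P_t+6)\dot P_t=6\int e^{4P_t}\dot P_t=\frac32\frac{d}{dt}\mathrm{vol}(\phi_t^*g)=0$).

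For the curvature terms I will change variables $y=\phi_t(x)$:
\[
\int_{\Sphplus}Q e^{4u_t}dV_g=\int_{\Sphplus}(Q\circ\phi_t^{-1})e^{4u}dV_g,\qquad \int_{\Sph^3}Te^{3u_t}dS_g=\int_{\Sph^3}(T\circ\phi_t^{-1})e^{3u}dS_g,
\]
using $e^{4P_t}dV_g=\phi_t^*dV_g$ and $e^{3P_t}dS_g=\phi_t^*dS_g$ on the invariant boundary. Differentiating at $t=0$ produces $-\int X(Q)e^{4u}$ and $-\int X(T)e^{3u}$. Combined with the constancy of the quadratic part, this yields $\frac12\int X(Q)e^{4u}+\frac23\int X(T)e^{3u}=0$, which is \eqref{eq:KW} after multiplying by $2$.

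The main obstacle will be the cocycle identity. Specifically, I need to verify that the cross term $\int(\Delta(u\circ\phi_t)\Delta P_t+2\nabla(u\circ\phi_t)\nabla P_t)$ equals $\int(P^4P_t)(u\circ\phi_t)$ plus a boundary contribution $2\int_{\Sph^3}P^3_gP_t\,u\circ\phi_t$ that vanishes. I also need to check that the remaining terms reorganize after change of variables into $6\int u-6\int u\circ\phi_t$, which cancels the linear term. I will first do this via the covariance $P^4_{e^{2w}g}=e^{-4w}P^4_g$ and $P^3_{e^{2w}g}=e^{-3w}P^3_g$ (for Neumann data), applied with $w=P_t$, keeping careful track of boundary terms and of regularity for $u\in\mathcal H$ by density.
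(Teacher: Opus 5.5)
Your proposal is correct and follows the paper's argument essentially step by step. You use the same curve $u_t=u\circ\phi_t+P_t$, the same cocycle identity $S(u_t)=S(u)+S(P_t)$ (the paper gets this from Lemma~\ref{lemma:conformal} with $P_t$ in the role of the solution), and the same change of variables for the $Q$ and $T$ terms. The differences are cosmetic: your functional is half of the paper's $I$, and you show that $\frac12\mathcal{E}(P_t)+6\int P_t$ vanishes for every $t$ via $6\int e^{4P_t}\dot P_t\,dV_g=\frac32\frac{d}{dt}\mathrm{vol}(\phi_t^*g)=0$, whereas the paper only needs the derivative at $t=0$, namely $12\int\dot P_0\,dV_g=3\int\mathrm{div}_gX\,dV_g=0$ by the divergence theorem.
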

As a consequence, the problem admits no solutions if $Q$ and $T$ are simultaneously monotone along a conformal direction. In particular, we obtain the analogue of \eqref{eq:HamzaKW} by choosing $X$ among the standard conformal vector fields associated to the coordinate functions.

\medskip

The paper is organized as follows. In Section~\ref{sec:var} we recall the direct variational formulation of
\eqref{eq:P} and the basic conformal invariance properties needed in the argument.
Section~\ref{sec:KW} is devoted to the derivation of the Kazdan--Warner identity on $\Sphplus$.
Finally, in Section~\ref{sec:nonexist} we discuss several concrete nonexistence criteria and illustrative examples.

\section{The variational setting}\label{sec:var}

\medskip
Let
\begin{equation*}
\mathcal{H}(M)=\left\{u\in H^{2}(M):\ \frac{\partial u}{\partial \nu}=0 \text{ on }\de M\right\}.
\end{equation*}
We adopt the convention that, when we write $\mathcal H$, we are referring to $\mathcal H(\Sph^4_+)$.

\medskip
\begin{definition} We define the energy functional $I:\mathcal H\to \R$ by
	\begin{equation}\label{eq:energy}
		\begin{split}
			I(u)&=
			\int_{\Sphplus}(\Delta_{g}u)^{2}\,dV_{g}
			+2\int_{\Sphplus}|\grad u|^{2}\,dV_{g}
			+12\int_{\Sphplus}u\,dV_{g}
			\\&-\int_{\Sphplus}Qe^{4u}\,dV_{g}
			-\frac{4}{3}\int_{\Sph^{3}}Te^{3u}\,dS_{g}.
		\end{split}
	\end{equation}
\end{definition}
Integrating by parts and using the condition $\frac{\de u}{\de \nu} = 0$ on $\Sph^3,$ it is easy to check that critical points of $I$ solve \eqref{eq:P} in the usual weak sense. More precisely, if $u$ is a critical point of $I$ then
\begin{equation*}
\begin{split}
&\int_{\Sphplus}\Delta_{g}u\,\Delta_{g}v\,dV_{g}
+2\int_{\Sphplus}\la \grad u,\grad v\ra\,dV_{g}
+6\int_{\Sphplus}v\,dV_{g}\\
&-\int_{\Sph^{3}}2Te^{3u}v\,dS_{g}
-\int_{\Sphplus}2Qe^{4u}v\,dV_{g}=0
 \qquad \forall\,v\in\mathcal H,
\end{split}
\end{equation*}
which is the weak formulation of \eqref{eq:P}. The details of this computation can be found in Appendix \ref{app:wf}.

\bigskip
Now, we decompose our functional in three parts that will be studied separately. The following definitions and result do not require us to be in the particular setting of the hemisphere, so we will place ourselves in a more general context. 

\medskip

\begin{definition}\label{def:quad} We define the quadratic form $Q_g:\mathcal H(M)\times \mathcal H(M)\to \R$ as
\begin{equation*}
Q_g(u,v)=\int_M (P^4_gu)v\,dV_g+2\int_{\de M}(P^3_gu)v\,ds_g\quad u,v\in \mathcal H(M).
\end{equation*}
\end{definition}
 In the case of $(\mathbb S^4_+,g)$, we can integrate by parts twice, using the definitions of $P^4_g$ and $P^3_g$ introduced in Section \ref{s:intro}, to obtain the simplified expression:
\begin{equation}\label{Quad}
	\begin{split}
		Q_g(u,v)&=\int_{\Sphplus}(\Delta^2u-2\Delta u)v\,dV_g-\int_{\Sph^{3}}\frac{\partial \Delta u}{\partial \nu}v\,ds_g \\ & =\int_{\Sphplus}\Delta_gu\Delta_gv+2\int_{\Sph^3}\nabla_gu\cdot\nabla_gv,\quad u,v\in\mathcal H.
	\end{split}
\end{equation}

\begin{definition}\label{def:S} Let $(M,g)$ be a $4-$dimensional Riemannian manifold with boundary $\partial M$, and let $Q_g$ and $T_g$ be its Q and boundary T curvatures. We define the functional $S:\mathcal H(M)\to\R$ by
	$$S(u)=Q_g(u,u)+4\int_{\Sphplus}Q_gu\,dV_g+4\int_{\Sph^3}T_g u\,ds_g,\quad u\in \mathcal H(M).$$
\end{definition}
\begin{lemma}\label{lemma:conformal} Let $u\in \mathcal H(M)$ be a solution to \eqref{eq:intro-QT}, and call $\tilde g = e^{2u}g$. Then, for every $v\in\mathcal H(M)$, it holds:
	$$S_g(u+v)=S_g(u)+S_{\tilde g}(v).$$
\end{lemma}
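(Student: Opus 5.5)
The plan is to expand both sides using bilinearity of $Q_g$ and the conformal covariance laws of $P^4_g$ and $P^3_g$. These say that for $\tilde g=e^{2u}g$,
\[
P^4_{\tilde g}=e^{-4u}P^4_g,\qquad P^3_{\tilde g}=e^{-3u}P^3_g ,
\]
with $dV_{\tilde g}=e^{4u}dV_g$ and $ds_{\tilde g}=e^{3u}ds_g$. The boundary law is understood on functions with vanishing normal derivative. Note that $\partial_{\tilde\nu}=e^{-u}\partial_\nu$, so $\mathcal H(M)$ is the same space for $g$ and $\tilde g$, and $S_{\tilde g}(v)$ makes sense.

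First, combining these laws with the volume factors gives
\[
Q_{\tilde g}(v,v)=\int_M (e^{-4u}P^4_gv)v\,e^{4u}dV_g+2\int_{\partial M}(e^{-3u}P^3_gv)v\,e^{3u}ds_g=Q_g(v,v).
\]
So the quadratic form is conformally invariant. Next, I expand the left-hand side using the symmetry of $Q_g$:
\[
S_g(u+v)=Q_g(u,u)+2Q_g(u,v)+Q_g(v,v)+4\int_M Q_g(u+v)\,dV_g+4\int_{\partial M}T_g(u+v)\,ds_g .
\]
Symmetry of $Q_g$ on $\mathcal H(M)$ follows from self-adjointness of the Paneitz pair under the Neumann condition. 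On the hemisphere it can be read off directly from \eqref{Quad}.

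Subtracting $S_g(u)$ leaves
\[
2Q_g(u,v)+Q_g(v,v)+4\int_M Q_gv\,dV_g+4\int_{\partial M}T_gv\,ds_g .
\]
In this remainder I substitute the equations \eqref{eq:intro-QT} satisfied by $u$. By definition of $Q_g$ and then \eqref{eq:intro-QT},
\[
2Q_g(u,v)=2\int_M (P^4_gu)v\,dV_g+4\int_{\partial M}(P^3_gu)v\,ds_g
=\int_M(4Q_{\tilde g}e^{4u}-4Q_g)v\,dV_g+\int_{\partial M}(4T_{\tilde g}e^{3u}-4T_g)v\,ds_g .
\]
Adding the linear terms, the $Q_g$ and $T_g$ contributions cancel. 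What remains is
\[
Q_g(v,v)+4\int_M Q_{\tilde g}v\,dV_{\tilde g}+4\int_{\partial M}T_{\tilde g}v\,ds_{\tilde g}=S_{\tilde g}(v),
\]
where the last equality uses the conformal invariance of $Q_g(v,v)$ established above.

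The main obstacle is the regularity needed to justify these steps. The covariance laws and the pointwise use of \eqref{eq:intro-QT} are cleanest for smooth $u$. For $u\in\mathcal H(M)$ I would work with the weak formulation instead. I would read $2Q_g(u,v)$ directly as the left-hand side of the weak equation, which is exactly the identity established in the appendix. The invariance $Q_{\tilde g}(v,v)=Q_g(v,v)$ I would interpret through the bilinear form and argue by density of smooth functions in $\mathcal H(M)$. Alternatively, I would invoke elliptic regularity to take $u$ smooth, which I expect to hold for solutions of \eqref{eq:P}.
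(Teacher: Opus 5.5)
Your proposal is correct and follows essentially the same route as the paper's proof. You expand $S_g(u+v)$ by bilinearity, use the covariance of $(P^4_g,P^3_g)$ with the volume factors to get $Q_g(v,v)=Q_{\tilde g}(v,v)$, and test \eqref{eq:intro-QT} against $v$ so that the terms $4\int_M Q_g v\,dV_g+4\int_{\partial M}T_g v\,ds_g$ cancel. Your extra remarks — on the symmetry of $Q_g$, on $\mathcal H(M)$ being the same space for $g$ and $\tilde g$, and on regularity — make explicit points the paper uses tacitly; in the paper's application the solution role is played by the smooth conformal factor $P_t$ anyway.
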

\begin{proof}
Expanding by linearity,
\begin{align*}
S_g(u+v) &= Q_g(u+v,u+v)+4\int_{M}Q_g(u+v)\,dV_g+4\int_{\de M}T_g(u+v)\,ds_g \\ &= S_g(u)+Q_g(v,v)+2Q_g(u,v)+4\int_{M}Q_gv\,dV_g+4\int_{\de M}T_gv\,ds_g.
\end{align*}
Remember that, by conformal invariance,
\begin{equation}\label{eq:camconf}
	\begin{split}
	P^4_{\tilde g}(\varphi)&=e^{-4u}P^4_g(\varphi),\quad dV_{\tilde g} = e^{4u}dV_g \\ P^3_{\tilde g}(\varphi)&=e^{-3u}P^3_g(\varphi),\quad ds_{\tilde g} = e^{3u}ds_g.
	\end{split}
\end{equation}
These imply that
\begin{equation}\label{eq:invQ}
Q_g(v,v) = \int_{M} (e^{4u}P^4_{\tilde g}v)v\,e^{-4u}dV_{\tilde g}+2\int_{\de M}(e^{3u}P^3_{\tilde g}v)v\,e^{-3u}ds_{\tilde g} = Q_{\tilde g}(v,v).
\end{equation}
Now, multiply \eqref{eq:intro-QT} by $v$, integrate and use the conformal identities \eqref{eq:camconf} to obtain
\begin{align}
	\label{eq:inv1}\int_{M}(P^4_gu)v\,dV_g+2\int_{M}Q_gv\,dV_g &= 2\int_{M}Q_{\tilde g}e^{4u}v\,dV_g = 2\int_{M}Q_{\tilde g}v\,dV_{\tilde g}, \\ \label{eq:inv2}\int_{\de M}(P^3_gu)v\,ds_g+\int_{\de M}T_gv\,ds_g &= \int_{\de M}T_{\tilde g}e^{3u}v\,ds_g= \int_{\de M}T_{\tilde g}v\,ds_{\tilde g}.
\end{align}
Combining \eqref{eq:inv1} and \eqref{eq:inv2} we get
\begin{equation}\label{eq:invR}
\begin{split}
&2Q_g(u,v)+4\int_{M} Q_g v\,dV_g+4\int_{\de M}T_g v\,ds_g \\ &=2\left(\int_{M}(P^4_gu)v\,dV_g+2\int_{M}Q_gv\,dV_g+2\int_{\de M}(P^3_gu)v\,ds_g+2\int_{\de M}T_gv\,ds_g\right) \\ &=4\left(\int_{M}Q_{\tilde g}v\,dV_{\tilde g}+\int_{\de M}T_{\tilde g}v\,ds_{\tilde g}\right).
\end{split}
\end{equation}
Finally, by \eqref{eq:invQ} and \eqref{eq:invR}, we reach the desired conclusion:
$$S_g(u+v) = S_g(u)+S_{\tilde g}(v).$$
\end{proof}
In view of definitions \ref{def:quad} and \ref{def:S}, the identity \eqref{Quad} and the fact that $Q_g=3$ and $T_g=0$ in $(\Sph^4_+,g)$, we can rewrite the energy functional \eqref{eq:energy} as
\begin{equation}\label{fun:decomp}
I=S-\mathcal{N}_{Q}-\frac{4}{3}\mathcal{B}_{T},
\end{equation} with
\begin{equation*}
	S(u)=\int_{\Sphplus}(\Delta_{g}u)^{2}\,dV_{g}
	+2\int_{\Sphplus}|\grad u|^{2}\,dV_{g}
	+12\int_{\Sphplus}u\,dV_{g},
\end{equation*}
 and
\begin{equation}\label{eq:NQ}
	\mathcal{N}_{Q}(u)=\int_{\Sphplus}Qe^{4u}\,dV_{g},
	\qquad
	\mathcal{B}_{T}(u)=\int_{\Sph^{3}}Te^{3u}\,dS_{g}.
\end{equation}
\section{Conformal variations: proof of Theorem \ref{thm:KW}}\label{sec:KW}
Let $X$ be a $C^{1}$ vector field on $\Sphplus$ that preserves the boundary, namely
\begin{equation*}
\la X,\nu\ra=-\la X,x_5\ra=0 \quad \text{on }\Sph^{3}.
\end{equation*}
Let $\{\phi_{t}\}$ for $t\in(-\e,\e)$ be its flow:
\begin{equation*}
\phi_{0}=\Id,\qquad \frac{d}{dt}\phi_{t}(p)=X(\phi_{t}(p)).
\end{equation*}
We write
\begin{equation}\label{eq:Xonf}
X(f)=\frac{d}{dt}\Big|_{t=0}f(\phi_{t})
\end{equation}
for the action of $X$ on functions. Assume now that $\phi_{t}$ is conformal, that is, there exists a smooth function $P_{t}$ on $\Sphplus$ such that
\begin{equation}\label{eq:conf-flow}
\phi_{t}^{*}g=e^{2P_{t}}g,\qquad P_{0}=0.
\end{equation}
To simplify the notation, we will refer to the above metric as $g_t$. Given a solution $u$ to \eqref{eq:P}, we consider the smooth curve
\begin{equation}\label{eq:ut}
u_{t}=u\circ\phi_{t}+P_{t}.
\end{equation}

Since $u$ is a critical point of $I$, we have
\begin{equation}\label{eq:dI0}
\frac{d}{dt}\Big|_{t=0}I(u_{t})=0.
\end{equation}
\begin{proposition}\label{prop:Sinv}
For any boundary-preserving conformal flow \eqref{eq:conf-flow} and any $u\in\mathcal{H}$,
\begin{equation*}
\left.\frac{d}{dt}\right\vert_{t=0}S(u_{t})=0.
\end{equation*}
\end{proposition}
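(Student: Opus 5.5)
The plan is to prove the stronger exact identity
\[
S(u_t)=S(u)+S(P_t)\qquad\text{for all } t\in(-\e,\e),
\]
and then differentiate only the explicit term $S(P_t)$. The identity comes from the cocycle property in Lemma \ref{lemma:conformal}, applied with $P_t$ in the role of the solution $u$ and $v=u\circ\phi_t$, together with diffeomorphism invariance.

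\emph{Step 1: $P_t$ is an admissible solution of \eqref{eq:intro-QT}.} I will check that $P_t\in\mathcal H$ and that it solves \eqref{eq:intro-QT} with $\tilde g=g_t=e^{2P_t}g$, $Q_{g_t}=3$ and $T_{g_t}=0$.
\begin{itemize}
\item Since $g_t=\phi_t^*g$ is isometric to $g$ via $\phi_t$, naturality of the curvatures gives $Q_{g_t}=Q_g\circ\phi_t=3$.
\item Because $X$ is tangent to $\Sph^3$, $\phi_t$ maps $\Sph^3$ to itself. Hence the boundary is totally geodesic for $g_t$ as well, so $H_{g_t}=0$ and $T_{g_t}=0$.
\item The conformal law for the mean curvature, $e^{P_t}H_{g_t}=H_g+3\,\partial_\nu P_t$, combined with $H_g=H_{g_t}=0$, yields $\partial_\nu P_t=0$. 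Thus $P_t\in\mathcal H$.
\item Since $\phi_t$ is conformal and preserves $\Sph^3$, it maps the normal direction to the normal direction. Therefore $\partial_\nu(u\circ\phi_t)=0$, so $u\circ\phi_t\in\mathcal H$, and hence $u_t\in\mathcal H$.
\end{itemize}

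\emph{Step 2: the cocycle identity.} Lemma \ref{lemma:conformal} gives
\[
S_g(P_t+u\circ\phi_t)=S_g(P_t)+S_{g_t}(u\circ\phi_t).
\]
Since $g_t=\phi_t^*g$ and $\phi_t$ is a diffeomorphism of $\Sphplus$ preserving $\Sph^3$, every term of $S_{g_t}(u\circ\phi_t)$ is the pull-back of the corresponding term of $S_g(u)$. Concretely,
\[
P^4_{g_t}(u\circ\phi_t)=(P^4_gu)\circ\phi_t,\qquad P^3_{g_t}(u\circ\phi_t)=(P^3_gu)\circ\phi_t,\qquad Q_{g_t}=Q_g\circ\phi_t,\qquad T_{g_t}=T_g\circ\phi_t,
\]
and the measures $dV_{g_t}$, $ds_{g_t}$ are the pull-backs of $dV_g$, $ds_g$. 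Changing variables therefore gives $S_{g_t}(u\circ\phi_t)=S_g(u)$, and the exact identity $S(u_t)=S(u)+S(P_t)$ follows.

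\emph{Step 3: differentiate at $t=0$.} Using $T_g=0$,
\[
S(P_t)=Q_g(P_t,P_t)+12\int_{\Sphplus}P_t\,dV_g .
\]
The quadratic term is $O(t^2)$ because $P_0=0$, so its derivative at $t=0$ vanishes. For the linear term, differentiate $\phi_t^*g=e^{2P_t}g$ at $t=0$: this gives $\mathcal L_Xg=2\dot P_0\,g$. Taking the trace in dimension four yields $\dot P_0=\tfrac14\operatorname{div}_gX$. Then
\[
\frac{d}{dt}\Big|_{t=0}\int_{\Sphplus}P_t\,dV_g=\frac14\int_{\Sphplus}\operatorname{div}_gX\,dV_g=\frac14\int_{\Sph^3}\la X,\nu\ra\,ds_g=0,
\]
by the boundary-preserving assumption. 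This proves the proposition.

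\emph{Main obstacle.} I expect the hardest part to be Step 1. One must confirm that the conformal factor of a boundary-preserving conformal flow satisfies the Neumann condition $\partial_\nu P_t=0$, so that Lemma \ref{lemma:conformal} applies within $\mathcal H$. One must also justify the naturality of $P^3$ and $T$ under diffeomorphisms preserving the boundary. If the mean-curvature argument is awkward, a fallback is to compute $P_t$ explicitly for the generators: rotations fixing $x_5$, for which $P_t\equiv0$, and the dilation-type flows generated by $\grad x_j$, $j\le4$. For the latter one checks directly that $\partial_{x_5}P_t=0$ on $\{x_5=0\}$ by the reflection symmetry $x_5\mapsto-x_5$ of the flow.
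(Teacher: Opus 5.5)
Your proposal is correct and follows the paper's proof essentially step for step. You verify $u_t\in\mathcal H$ (conformality maps normals to normals, and $\partial_\nu P_t=0$ from the conformal transformation law of the boundary's extrinsic curvature; you use the mean curvature where the paper uses the full second fundamental form). You get $S(u_t)=S(P_t)+S(u)$ from Lemma \ref{lemma:conformal}, applied with $P_t$ as the solution and $v=u\circ\phi_t$, together with diffeomorphism invariance. Finally, you kill $\frac{d}{dt}\big|_{t=0}S(P_t)=12\int_{\Sphplus}\dot P_0\,dV_g$ via $\dot P_0=\tfrac14\operatorname{div}_gX$ and the divergence theorem, exactly as the paper does.
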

\begin{proof} First, we observe that $u_t \in \mathcal H$:
since $\phi_t$ is a $C^\infty$ diffeomorphism of $\Sphplus$, 
$u\circ\phi_t\in H^{2}(\Sphplus)$. Moreover $P_t\in C^\infty(\Sphplus)\subset H^{2}(\Sphplus)$, so $u_t\in H^{2}(\Sphplus)$.

\medskip
It remains to check the boundary condition. Fix $p\in\Sph^3$. By the chain rule,
\begin{equation*}
	d(u\circ\phi_t)_p(\nu(p))=du_{\phi_t(p)}\big(d\phi_t|_p(\nu(p))\big).
\end{equation*}
Since $\phi_t(\Sph^3)=\Sph^3$, we have $d\phi_t(T_p\Sph^3)=T_{\phi_t(p)}\Sph^3$.
As $\phi_t$ is conformal, $d\phi_t$ preserves orthogonality, hence it maps the normal line
$N_p(\Sph^3)=\mathrm{span}\{\nu(p)\}$ onto $N_{\phi_t(p)}(\Sph^3)=\mathrm{span}\{\nu(\phi_t(p))\}$.
Therefore there exists $\lambda_t(p)\in\R$ such that
\begin{equation*}
	d\phi_t|_p(\nu(p))=\lambda_t(p)\,\nu(\phi_t(p)).
\end{equation*}
Note that $\lambda_t(p)\neq 0$ since $d\phi_t|_p$ is invertible. Moreover $\lambda_0(p)=1$
	because $\phi_0=\Id$, and by continuity $\lambda_t(p)>0$ for $|t|$ small.

Consequently,
\begin{equation*}
	\frac{\partial(u\circ\phi_t)}{\partial \nu}(p)=\lambda_t(p)\,\frac{\partial u}{\partial \nu}(\phi_t(p))=0,
\end{equation*}
because $u\in\mathcal H$ and $\phi_t(p)\in\Sph^3$.

\medskip
Finally, we observe that the conformal factor $P_t$ satisfies 
\begin{equation}\label{eq:Pt}
	\frac{\partial P_t}{\partial_\nu}=0\qquad\text{on }\partial M.
\end{equation}
Indeed, since $\phi_t$ is a diffeomorphism with $\phi_t(\Sph^3)=\Sph^3$, then $\phi_t:(\Sphplus
,g_t)\to (\Sphplus,g)$ is an isometry. In particular, the second fundamental form $\mathbb I_g$ is transported via $\phi_t$, so $(\Sphplus,g_t)$ also has totally geodesic boundary.

\medskip
It is known that, under the conformal change of metrics $g_t=e^{2P_{t}}g$, we have $$e^{-2P_t}\mathbb{I}_{g_t} =\mathbb{I}_{g} +2\frac{\partial P_t}{\partial \nu}g$$ on $\Sph^{3}$, which gives \eqref{eq:Pt}. 

\medskip
Hence $$\frac{\partial u_t}{\partial \nu}=\frac{\partial(u\circ\phi_t)}{\partial \nu}+\frac{\partial P_t}{\partial_\nu}=0\quad \text{on }\Sph^3,$$ 
showing that $u_t\in\mathcal H$.

\medskip
By lemma \ref{lemma:conformal} and the invariance of $S$ under diffeomorphisms,
\begin{equation*}
	\begin{split}
		S_g(u_t) &= S_g(u\circ \phi_t+P_t) = S_g(P_t)+S_{e^{2P_t}g(u\circ \phi_t)  }\\ &= S_g(P_t)+S_{\phi_t ^*g}(u\circ\phi_t)=S_g(P_t)+S_g(u).
	\end{split}
\end{equation*}
Hence, since $S_g(u)$ is independent of $t$,
\begin{equation}\label{eq:first}
	\left.\frac{d}{dt}\right\vert_{t=0}S_g(u_t) = \left.\frac{d}{dt}\right\vert_{t=0}S_g(P_t).
\end{equation}
Let us prove that  $\left.\frac{d}{dt}\right\vert_{t=0}S_g(P_t)=0$. First, we see that
\begin{equation}\label{eq:der1}
	\begin{split}
\left.\frac{d}{dt}\right\vert_{t=0}S_g(P_t)&=\left.\frac{d}{dt}\right\vert_{t=0}\left(\int_{\Sphplus}(\Delta_{g}P_t)^{2}\,dV_{g}
	+2\int_{\Sphplus}|\grad P_t|^{2}\,dV_{g}
	+12\int_{\Sphplus}P_t\,dV_{g}\right)\\&=12\int_{\Sphplus}\dot P_0\,dV_g,
	\end{split}
\end{equation}
since all the quadratic terms vanish because $P_0=0$. Now, we aim to differentiate the identity $\phi_t^*g=e^{2P_t}g$ on $t=0$. Remember that, for every $C^1$ vector fields $Y,Z$ on $\Sphplus$, we have
\begin{equation*}
	(\phi_t^{*}g)(Y,Z)
	=g\!\left((\phi_t)_{*}Y,(\phi_t)_{*}Z\right)\circ\phi_t,
\end{equation*}
that is, 
\begin{equation*}
(\phi_t^{*}g)_{p}(Y_p,Z_p)
=g_{\phi_t(p)}\!\left(d\phi_t(p)[Y_p],d\phi_t(p)[Z_p]\right).
\end{equation*}
Therefore,
\begin{equation*}
	\begin{split}
	\frac{d}{dt}\Big|_{t=0}(\phi_t^{*}g)(Y,Z)
	&= X\!\big(g(Y,Z)\big)
	+ g\!\left(\frac{d}{dt}\Big|_{t=0}(\phi_t)_{*}Y,\ Z\right)
	+ g\!\left(Y,\ \frac{d}{dt}\Big|_{t=0}(\phi_t)_{*}Z\right)\\
	&= X\!\big(g(Y,Z)\big)+g([X,Y],Z)+g(Y,[X,Z])\\[5pt]&= (\mathcal{L}_X g)(Y,Z).
	\end{split}
\end{equation*} 
where the right-hand side is the Lie derivative. Consequently, we have
\begin{equation}\label{eq:lie}
\mathcal L_Xg=2\dot P_0e^{2P_0}g=2\dot P_0g.
\end{equation}
If we now take traces on \eqref{eq:lie} we get $2\text{div}_g X = 8 \dot P_0,$ which gives $$\dot P_0=\frac14\text{div}_gX.$$
Integrating on $\Sphplus$ and applying the divergence theorem:
\begin{equation}\label{eq:intP0}
\int_{\Sphplus}\dot P_0\,dV_g=\frac14\int_{\Sphplus}\text{div}_gX\,dV_g=\frac14\int_{\Sph^3}X\cdot \nu\,ds_g = 0,
\end{equation}
since $X$ preserves the boundary. The result follows from combining \eqref{eq:intP0}, \eqref{eq:der1} and \eqref{eq:first}.
\end{proof}

We now compute the derivatives of the nonlinear terms $\mathcal N_Q$ and $\mathcal B_T$ defined in \eqref{eq:NQ} along \eqref{eq:ut}. For the interior term,
\begin{equation*}
\frac{d}{dt}\Big|_{t=0}\int_{\Sphplus}Qe^{4u_{t}}\,dV_{g}
=\frac{d}{dt}\Big|_{t=0}\int_{\Sphplus}Q(x)e^{4u(\phi_{t}(x))}e^{4P_{t}(x)}\,dV_{g}(x).
\end{equation*}
Performing the change of variables $y=\phi_{t}(x)$, so that $x=\phi_{-t}(y)$, and using the conformal relation \eqref{eq:conf-flow} to relate the Jacobian to $P_{t}$, we obtain
\begin{equation}\label{eq:der-NQ-change}
\frac{d}{dt}\Big|_{t=0}\int_{\Sphplus}Qe^{4u_{t}}\,dV_{g}
=\frac{d}{dt}\Big|_{t=0}\int_{\Sphplus}Q(\phi_{-t}(y))e^{4u(y)}\,dV_{g}(y)
=-\int_{\Sphplus}X(Q)e^{4u}\,dV_{g}.
\end{equation}
Here we have used the group property of the flow, which implies that $\phi_{-t}= \phi^{-1}_{t}$, and \eqref{eq:Xonf}. More precisely, $$\frac{d}{dt}\Big|_{t=0}Q\circ\phi^{-1}_{t}=\frac{d}{dt}\Big|_{t=0}Q\circ\phi_{-t}=-\frac{d}{ds}\Big|_{s=0}Q\circ\phi_{s}=-X(Q).$$
Analogously, for the boundary term we obtain
\begin{equation}\label{eq:der-BT}
\frac{d}{dt}\Big|_{t=0}\int_{\Sph^{3}}Te^{3u_{t}}\,dS_{g}
=\frac{d}{dt}\Big|_{t=0}\int_{\Sph^{3}}T(\phi_{-t}(y))e^{3u(y)}\,dS_{g}(y)
=-\int_{\Sph^{3}}X(T)e^{3u}\,dS_{g}.
\end{equation}

Recalling the splitting of the functional given in \eqref{fun:decomp} and combining \eqref{eq:dI0} with Proposition \ref{prop:Sinv}, \eqref{eq:der-NQ-change} and \eqref{eq:der-BT}, we obtain the Kazdan--Warner type identity from  Theorem \ref{thm:KW}.

\section{Nonexistence criterions}\label{sec:nonexist}
As a direct consequence of \eqref{eq:KW}, we obtain the following obstruction to the existence of solutions to the prescribed curvature problem \eqref{eq:P}.

\begin{corollary}\label{cor:nonexist}
Assume there exists a boundary-preserving conformal vector field $X$ on $\Sphplus$ such that
\begin{equation*}
X(Q)\ge 0 \text{ on }\Sphplus,\qquad X(T)\ge 0 \text{ on }\Sph^{3},
\end{equation*}
and at least one of the two functions $X(Q)$, $X(T)$ is not identically zero. Then \eqref{eq:P} has no solutions in $\mathcal{H}$.
\end{corollary}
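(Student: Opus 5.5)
We argue by contradiction, relying on Theorem~\ref{thm:KW}. Suppose $u\in\mathcal H$ solves \eqref{eq:P}, and let $X$ be the boundary-preserving conformal vector field from the hypothesis. Applying \eqref{eq:KW} to this $X$ gives
\begin{equation*}
\int_{\Sphplus}X(Q)e^{4u}\,dV_{g}+\frac{4}{3}\int_{\Sph^{3}}X(T)e^{3u}\,ds_{g}=0.
\end{equation*}
Both integrands are nonnegative, because $X(Q)\ge 0$, $X(T)\ge 0$ and the exponential weights are strictly positive. Hence each integral must vanish separately.

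The remaining task is to show that a vanishing integral forces the corresponding function to vanish identically. This contradicts the assumption that $X(Q)$ or $X(T)$ is not identically zero. Since $Q$ and $T$ are smooth and $X$ is smooth, $X(Q)$ and $X(T)$ are continuous. If, say, $X(Q)\not\equiv 0$, there is an open set $U\subset\Sphplus$ of positive measure on which $X(Q)\ge c>0$. Then
\[
\int_{\Sphplus}X(Q)e^{4u}\,dV_g\ \ge\ c\int_U e^{4u}\,dV_g>0,
\]
since $e^{4u}>0$ almost everywhere. The function $u\in H^2(\Sphplus)$ is finite a.e., and by Moser--Trudinger $e^{4u}\in L^1$, so all integrals are finite. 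The boundary case works the same way. If $X(T)\not\equiv0$, it is positive on a relatively open subset of $\Sph^3$ of positive surface measure. The trace of $u$ lies in $H^{3/2}(\Sph^3)$, so it is finite a.e. on $\Sph^3$ and $e^{3u}>0$ a.e. there, which forces the boundary integral to be positive.

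Either way, the left-hand side of \eqref{eq:KW} is strictly positive, a contradiction. So no solution exists. The only delicate point is the integrability and a.e.-positivity of the weights $e^{4u}$ and $e^{3u}$ for $u\in\mathcal H$. We handle it by the standard embedding of $H^2$ into exponential Orlicz classes in dimension four (and the corresponding trace inequality on $\Sph^3$), as already used implicitly to define the functional $I$.
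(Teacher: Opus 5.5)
Your argument is correct and is exactly the reasoning the paper intends: it states the corollary as a direct consequence of the identity \eqref{eq:KW} in Theorem~\ref{thm:KW} and leaves the sign argument implicit. You have spelled out those implicit details, namely that continuity of $X(Q)$ and $X(T)$ gives a relatively open set of strict positivity, and that $e^{4u}$ and $e^{3u}$ are integrable and positive a.e.\ for $u\in\mathcal H$.
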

As a particular case, we obtain the classical Kazdan--Warner result, which establishes that there can be no solution for curvatures with the same monotonicity with respect to the same Euclidean direction.
\begin{corollary}\label{cor:coord}
	For $i\in\{1,\dots,4\}$ let $x_i:\Sphplus\to\R$ be the $i$-th Euclidean coordinate in $\R^{5}$ and set
	\[
	X_i=\grad_g x_i .
	\]
	Then $X_i$ is a boundary-preserving conformal vector field on $\Sphplus$ and any solution $u\in\mathcal H$
to  \eqref{eq:P} satisfies
	\[
	\int_{\Sphplus} \langle\grad_gx_i,\grad_gQ\rangle\,e^{4u}\,dV_g+\frac{4}{3}\int_{\Sph^{3}} \langle \nabla_gx_i,\nabla_gT\rangle\,e^{3u}\,ds_g=0.
	\]
	In particular, if for some $i$ one has
	\[
\langle\grad_gx_i,\grad_gQ\rangle \ge 0 \ \text{in }\Sphplus,\qquad \langle \nabla_gx_i,\nabla_gT\rangle\ge 0 \ \text{on }\Sph^{3},
	\]
	and at least one of the two functions above is not identically zero, then \eqref{eq:P}
	admits no solutions in $\mathcal H$.
\end{corollary}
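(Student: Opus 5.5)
The plan is to derive this corollary from Theorem~\ref{thm:KW}. The only genuinely new ingredient is that $X_i=\grad_g x_i$, for $i=1,\dots,4$, is a conformal vector field that preserves the boundary.

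\textbf{Step 1: $X_i$ is conformal.} On the round sphere $\Sph^4\subset\R^5$, the restriction of a coordinate function is a first spherical harmonic. It therefore satisfies the classical identity
\[
\nabla^2_g x_i=-x_i\,g.
\]
To verify this, write $x_i=\langle x,e_i\rangle$ and use the Gauss formula: the tangential Hessian equals the Euclidean Hessian (which is zero) minus the second fundamental form contribution $\langle x,e_i\rangle g$.

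Since $X_i$ is a gradient, its Lie derivative is twice the Hessian:
\[
\mathcal L_{X_i}g=2\nabla^2_g x_i=-2x_i\,g.
\]
So $X_i$ is conformal, with infinitesimal factor $\dot P_0=-x_i$. This is consistent with $\dot P_0=\tfrac14\operatorname{div}_g X_i=\tfrac14\Delta_g x_i=-x_i$, since $\Delta_g x_i=-4x_i$. By the Lie-derivative computation in the proof of Proposition~\ref{prop:Sinv}, the flow $\phi_t$ of $X_i$ then satisfies $\phi_t^*g=e^{2P_t}g$. Alternatively, one can note that these flows are the standard one-parameter groups of Möbius dilations of $\Sph^4$.

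\textbf{Step 2: $X_i$ preserves the boundary.} On $\Sph^3=\{x_5=0\}$ we have $\nu=-e_5$, and for $i\le4$,
\[
\langle X_i,\nu\rangle=\partial_\nu x_i=-\langle e_i-x_i x,\,e_5\rangle=x_ix_5=0 .
\]
Hence $X_i$ is tangent to $\Sph^3$. Its flow therefore maps $\Sph^3$ to itself, and since $\Sphplus$ is invariant under the flow of a field tangent to its boundary, $\phi_t$ maps $\Sphplus$ to itself.

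\textbf{Step 3: apply the theorem.} Now $X_i(Q)=\langle\grad_g x_i,\grad_g Q\rangle$ on $\Sphplus$. On $\Sph^3$, because $X_i$ is tangent there, $X_i(T)$ equals the inner product of the tangential gradients, and since $\Sph^3$ is totally geodesic this is $\langle\nabla_g x_i,\nabla_g T\rangle$. Substituting into \eqref{eq:KW} gives the stated identity.

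The nonexistence statement follows as in Corollary~\ref{cor:nonexist}. If both integrands are nonnegative and one of them is not identically zero, then the corresponding integral is strictly positive, because it is continuous and nonnegative and the weights $e^{4u}$ and $e^{3u}$ are positive. This contradicts the identity.

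The only step needing care is Step 1, that is, justifying the Hessian identity for $x_i$ and ensuring that the flow exists for small $|t|$ and stays in $\Sphplus$. The first is a standard Gauss-formula computation; the second follows from tangency at the boundary.
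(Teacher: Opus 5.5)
Your proposal is correct and follows the same route as the paper, which simply applies Theorem~\ref{thm:KW} with $X=X_i$ and records only $X_i(x)=e_i-x_ix$. Your checks that $\nabla^2_g x_i=-x_i\,g$ and $\langle X_i,\nu\rangle=x_ix_5=0$ on $\Sph^3$ supply the verification the paper leaves implicit. One small precision: the computation in Proposition~\ref{prop:Sinv} goes from $\phi_t^*g=e^{2P_t}g$ to $\mathcal L_Xg=2\dot P_0\,g$, not the other way, so to conclude that the flow of $X_i$ is conformal you should instead use $\frac{d}{dt}\phi_t^*g=\phi_t^*(\mathcal L_{X_i}g)=-2(x_i\circ\phi_t)\,\phi_t^*g$, which integrates to $P_t=-\int_0^t x_i\circ\phi_s\,ds$ (or rely on your M\"obius-dilation remark).
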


	On $\Sph^{4}\subset\R^{5}$ one has the explicit formula
	\[
	X_i(x)=e_i-x_i x,
	\]
	hence $X_i(x_i)=1-x_i^{2}$ and therefore, for curvature functions $Q(x)=f(x_i)$ and $T(y)=g(y_i)$,
	\[
	X_i(Q)=f'(x_i)(1-x_i^{2}),\qquad X_i(T)=g'(y_i)(1-y_i^{2}).
	\]
	In particular, if $f'\ge0$ and $g'\ge0$ and at least one is not identically zero, then	\eqref{eq:P} has no solutions in $\mathcal H$.
	
	\medskip
	The preceding corollary can be extended to any conformal direction in the following sense:
\begin{corollary}\label{cor:conjugation}
	Let $\Psi:\Sphplus\to\Sphplus$ be a conformal diffeomorphism such that $\Psi(\Sph^3)=\Sph^3$.
	Fix $i\in\{1,\dots,4\}$ and set $X_i=\grad_g x_i$ on $\Sphplus$.
	Assume that
	\begin{equation}\label{eq:monotone-conj}
		X_i(Q\circ\Psi)\ge 0 \ \text{in }\Sphplus,
		\qquad
		X_i(T\circ\Psi)\ge 0 \ \text{on }\Sph^3,
	\end{equation}
	and that at least one of the two functions in \eqref{eq:monotone-conj} is not identically zero.
	Then the boundary problem \eqref{eq:P} admits no solutions in $\mathcal H$.
\end{corollary}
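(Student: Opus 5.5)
The idea is to pull the problem back by $\Psi$ and apply Corollary~\ref{cor:coord} to the transformed data. Suppose, for contradiction, that $u\in\mathcal H$ solves \eqref{eq:P}. Since $\Psi$ is conformal, we can write $\Psi^*g=e^{2w}g$ for some smooth $w$ on $\Sphplus$. We then set
\[
\tilde u=u\circ\Psi+w .
\]

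The first task is to check that $\tilde u$ is an admissible solution of the transformed problem. Membership in $\mathcal H$ follows exactly as in the proof of Proposition~\ref{prop:Sinv}. Since $\Psi$ maps $\Sph^3$ to itself and is conformal, $d\Psi$ carries the normal line to the normal line, so $\partial_\nu(u\circ\Psi)=0$. Moreover, $\Psi$ is an isometry from $(\Sphplus,e^{2w}g)$ onto $(\Sphplus,g)$, so the boundary stays totally geodesic for $e^{2w}g$. The transformation law for the second fundamental form then gives $\partial_\nu w=0$.

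Next we identify the curvatures of $e^{2\tilde u}g$. We have
\[
e^{2\tilde u}g=e^{2u\circ\Psi}\Psi^*g=\Psi^*(e^{2u}g).
\]
By naturality of $Q$ and $T$ under isometries, $Q_{e^{2\tilde u}g}=Q\circ\Psi$ and $T_{e^{2\tilde u}g}=T\circ\Psi$. Since $Q_g=3$, $T_g=0$ and $\partial_\nu\tilde u=0$, the transformation laws \eqref{eq:intro-QT} show that $\tilde u$ solves \eqref{eq:P} with $(Q,T)$ replaced by $(Q\circ\Psi,T\circ\Psi)$.

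This regularity and identification step is the only delicate point. It may need smoothness of $u$, or an argument at the weak level. One such argument tests the weak formulation with $v\circ\Psi^{-1}$ and uses the conformal covariance identities \eqref{eq:camconf}, exactly as in Lemma~\ref{lemma:conformal}. An alternative route avoids this entirely: apply Theorem~\ref{thm:KW} to $u$ with the pushed-forward field $Y=\Psi_*X_i$. This field is conformal and boundary-preserving, and its action satisfies $Y(Q)\circ\Psi=X_i(Q\circ\Psi)$, so after a change of variables the same conclusion follows.

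Finally, we apply Corollary~\ref{cor:coord}, which is valid by Theorem~\ref{thm:KW}, to $\tilde u$ with the curvatures $Q\circ\Psi$ and $T\circ\Psi$. This gives
\[
\int_{\Sphplus}X_i(Q\circ\Psi)e^{4\tilde u}\,dV_g+\frac43\int_{\Sph^3}X_i(T\circ\Psi)e^{3\tilde u}\,ds_g=0.
\]
Both integrands are nonnegative by \eqref{eq:monotone-conj}, and the weights $e^{4\tilde u}$ and $e^{3\tilde u}$ are positive almost everywhere. Hence both integrals vanish, which forces $X_i(Q\circ\Psi)\equiv0$ and $X_i(T\circ\Psi)\equiv0$ by continuity. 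This contradicts the assumption that at least one of them is not identically zero, so \eqref{eq:P} has no solution in $\mathcal H$.
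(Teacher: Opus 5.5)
Your argument is correct, but your main route differs from the paper's. The paper's proof is exactly what you list as the ``alternative route''. It keeps $u$ and $(Q,T)$ fixed and moves the vector field: $X=\Psi_*X_i$ is again a boundary-preserving conformal field, and the chain rule gives $X(F)\circ\Psi=X_i(F\circ\Psi)$. So \eqref{eq:monotone-conj} says precisely $X(Q)\ge0$ and $X(T)\ge0$, and Corollary~\ref{cor:nonexist} applies.

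You instead move the solution to $\tilde u=u\circ\Psi+w$ and the data to $(Q\circ\Psi,T\circ\Psi)$, then apply Corollary~\ref{cor:coord} with the standard field $X_i$. The two arguments are dual. Since $e^{4\tilde u}dV_g=\Psi^*(e^{4u}dV_g)$ and $e^{3\tilde u}ds_g=\Psi^*(e^{3u}ds_g)$, the change of variables $y=\Psi(x)$ turns your identity for $\tilde u$ into identity \eqref{eq:KW} for $u$ with $X=\Psi_*X_i$.

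The paper's version is shorter. Theorem~\ref{thm:KW} is only ever applied to the given $u$, so the regularity question you flag never arises. Your version costs the extra check that $\tilde u$ is a weak solution of the transformed problem, and your sketch of that check does close:
\begin{itemize}
\item $v\circ\Psi^{-1}\in\mathcal H$, by the normal-line argument.
\item $Q_g(u\circ\Psi,v)=Q_g(u,v\circ\Psi^{-1})$, by conformal invariance of the bilinear form combined with naturality under $\Psi$.
\item $Q_g(w,v)=6\int_{\Sphplus}(e^{4w}-1)v\,dV_g$, because $w$ satisfies $P^4_gw+6=6e^{4w}$ and $P^3_gw=0$ (the metric $\Psi^*g$ has $Q\equiv3$ and $T\equiv0$).
\item The nonlinear terms change variables with Jacobians $e^{4w}$ in the interior and $e^{3w}$ on $\Sph^3$.
\end{itemize}
In exchange, your route makes explicit that \eqref{eq:P} is invariant under the boundary-preserving conformal group acting by $u\mapsto u\circ\Psi+w$ and $(Q,T)\mapsto(Q\circ\Psi,T\circ\Psi)$, which is a useful structural fact in its own right.
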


\begin{proof}
	Since $\Psi$ is conformal and preserves the boundary, the vector field $X=\Psi_*X_i$ is also boundary-preserving conformal vector field
	on $\Sphplus$. Moreover, for any smooth function $F$ on $\Sphplus$ one has
	\[
	X(F)\circ\Psi = X_i(F\circ\Psi),
	\]
	hence $X(Q)\ge0$ in $\Sphplus$ and $X(T)\ge0$ on $\Sph^3$ are equivalent to \eqref{eq:monotone-conj}.
	Therefore, the claim follows from Corollary \ref{cor:nonexist} applied to $X$.
\end{proof}

	We can construct easy explicit examples for corollary \ref{cor:conjugation} in the following way: given two distinct points $p,q\in\Sph^3$, one may choose $\Psi$ mapping $\pm e_i\in\Sph^3$
	to $p,q$, so that $X=\Psi_*X_i$ generates the boundary Möbius flow with fixed points $p$ and $q$.
	Then \eqref{eq:monotone-conj} is equivalent to say that $Q$ and $T$ are nondecreasing along that flow. 
	
	\medskip
	\noindent Let us illustrate this procedure.
	
	\medskip\noindent
	Identify $\Sph^4_+\subset \R^5$ as
	\begin{equation*}
	\begin{split}
	\Sph^4_+&=\{(\bar x,x_5)\in\R^4\times\R:\ |\bar x|^2+{x_5}^2=1,\ x_5\ge 0\},\\
	 \partial \Sph^4_+&=\{(\bar x,0)\in \R^4\times \{0\}: |\bar x|=1\}\cong \Sph^3\subset\R^4.
	\end{split}
	\end{equation*}
	Consider the restriction to $\Sph^4_+$ of the stereographic projection from the south pole $(0,\dots,0,-1)$ ,
	\[
	\Pi(\bar x,x_5)=\frac{\bar x}{1+x_5}\in B^4,
	\qquad
	\Pi^{-1}(y)=\left(\frac{2y}{1+|y|^2},\,\frac{1-|y|^2}{1+|y|^2}\right),
	\]
	which maps $\Sph^4_+$ conformally onto the unit ball $B^4\subset\R^4$ and restricts to the identity on the boundary
	$\Sph^3$ (since $\Pi(\bar x,0)=\bar x$ for $|\bar x|=1$).
	For $a\in B^4$, we define the following Möbius automorphism of $B^4:$
	\begin{equation*}\label{eq:ball-auto}
		\Phi_a(y)=\frac{(1-|a|^2)\,y+(1+2a\cdot y+|y|^2)\,a}{1+2a\cdot y+|a|^2|y|^2},
		\qquad y\in B^4,
	\end{equation*}
	so that $\Phi_a(B^4)=B^4$ and $\Phi_a(\Sph^3)=\Sph^3$.
	Set $\Psi=\Pi^{-1}\circ \Phi_a\circ \Pi:\ \Sph^4_+\to \Sph^4_+,$	which is a conformal diffeomorphism with $\Psi(\Sph^3)=\Sph^3$. Now choose, for instance, 
	\[
	a=\Big(0,\frac12,0,0\Big)\in B^4.
	\]
	Since $\Pi\vert_{\Sph^3}=\Id$, on the boundary we have $\Psi\vert_{\Sph^3}=\Phi_a\vert_{\Sph^3}$, and a direct computation gives
	\[
	p=\Psi(e_1)=\Phi_a(e_1)=\Big(\frac35,\frac45,0,0\Big),
	\qquad
	q=\Psi(-e_1)=\Phi_a(-e_1)=\Big(-\frac35,\frac45,0,0\Big).
	\]
	
	\medskip\noindent
	Let $X_1=\grad_g x_1$ on $\Sph^4_+$ and define $X=\Psi_*X_1$, which generates a boundary Möbius flow with fixed points
	$p$ and $q$ on $\Sph^3$. Consider the data
	\[
	Q(x)=3+\varepsilon\,(x_1\circ \Psi^{-1})(x)\quad \text{in }\Sph^4_+,
	\qquad
	T\equiv 1\quad \text{on }\Sph^3,
	\]
	with $\varepsilon>0$. Then
	\[
	(Q\circ\Psi)(x)=3+\varepsilon x_1,\qquad (T\circ\Psi)\equiv 1,
	\]
	and therefore
	\[
	X_1(Q\circ\Psi)=\varepsilon\,X_1(x_1)=\varepsilon\,|\grad_g x_1|^2=\varepsilon(1-x_1^2)\ge 0,
	\qquad
	X_1(T\circ\Psi)=0.
	\]
	Since $X_1(Q\circ\Psi)\not\equiv 0$, the assumptions of Corollary~\ref{cor:conjugation} are satisfied, hence the boundary
	problem \eqref{eq:P} admits no solutions in $\mathcal H$ for this choice of $(Q,T)$.

\appendix

\section{Weak Formulation}\label{app:wf}
Let $\mathcal{H}$ be the functional space defined in Section \ref{sec:var} and fix $u\in \mathcal H$. For any $v\in\mathcal{H}$, multiplying the first equation in \eqref{eq:P} by $v$ and integrating over $\Sphplus$ gives

\begin{equation}\label{eq:weak-start}
	\int_{\Sphplus}\left(\Delta_{g}^{2}u-2\Delta_{g}u+6\right)v\,dV_{g}=\int_{\Sphplus}2Qe^{4u}v\,dV_{g}.
\end{equation}
We integrate by parts in the term with $\Delta_{g}^{2}u$:
\begin{equation*}
	\begin{split}
		\int_{\Sphplus}\Delta_{g}^{2}u\,v\,dV_{g}
		&=\int_{\Sphplus}\Delta_{g}u\,\Delta_{g}v\,dV_{g}
		-\int_{\Sph^{3}}\Delta_{g}u\,\p_{\nu}v\,dS_{g}
		+\int_{\Sph^{3}}\p_{\nu}(\Delta_{g}u)\,v\,dS_{g}\\
		&=\int_{\Sphplus}\Delta_{g}u\,\Delta_{g}v\,dV_{g}
		+\int_{\Sph^{3}}\p_{\nu}(\Delta_{g}u)\,v\,dS_{g},
	\end{split}
\end{equation*}
where we used $\p_{\nu}v=0$ on $\Sph^{3}$. Using the boundary condition $-\p_{\nu}\Delta_{g}u=2Te^{3u}$, we obtain
\begin{equation}\label{eq:ibp-bilap-final}
	\int_{\Sphplus}\Delta_{g}^{2}u\,v\,dV_{g}
	=\int_{\Sphplus}\Delta_{g}u\,\Delta_{g}v\,dV_{g}
	-\int_{\Sph^{3}}2Te^{3u}v\,dS_{g}.
\end{equation}
Similarly,
\begin{equation}\label{eq:ibp-lap}
	-2\int_{\Sphplus}\Delta_{g}u\,v\,dV_{g}
	=2\int_{\Sphplus}\la \grad u,\grad v\ra\,dV_{g}
	-2\int_{\Sph^{3}}\p_{\nu}u\,v\,dS_{g}
	=2\int_{\Sphplus}\la \grad u,\grad v\ra\,dV_{g}.
\end{equation}
Plugging \eqref{eq:ibp-bilap-final} and \eqref{eq:ibp-lap} into \eqref{eq:weak-start} yields the weak formulation
\begin{equation*}
	\int_{\Sphplus}\Delta_{g}u\,\Delta_{g}v\,dV_{g}
	+2\int_{\Sphplus}\la \grad u,\grad v\ra\,dV_{g}
	+6\int_{\Sphplus}v\,dV_{g}
	-\int_{\Sph^{3}}2Te^{3u}v\,dS_{g}
	-\int_{\Sphplus}2Qe^{4u}v\,dV_{g}=0.
\end{equation*}

\section*{Acknowledgements}
Part of this work was carried out during the conference {\it PDEs at Grand Paradis
V}. The authors thank the organizers for providing a stimulating research environment.

A. DlT. acknowledges financial support from the Spanish Ministry of Science and Innovation (MICINN), through the
IMAG-Maria de Maeztu Excellence Grant CEX2020-001105-M/AEI/10.13039/501100011033. She is also supported by the FEDER-MINECO Grant PID2023-150166NB-I00;  RED2022-134784-T, funded
by MCIN/AEI/10.13039/501100011033; Fondi Ateneo - Sapienza Università di Roma;  DFG - Projektnummer: 561401741; and INdAM-GNAMPA  Projects 2024 with codice
CUP E53C23001670001 and 2025 with codice CUP E5324001950001.
\smallskip

Both authors have been supported by the MICIN/AEI through the Grant PID2024-155314NB-I00 and  by J. Andalucia (FQM-116).


\begin{thebibliography}{99}
	
\bibitem{Branson}
T.~P. Branson,
\textit{Differential operators canonically associated to a conformal structure},
Math. Scand. \textbf{57} (1985), no.~2, 293--345.

\bibitem{brendle2003}
S. Brendle,
\textit{Global existence and convergence for a higher order flow in conformal geometry,}
\textit{Annals of Math.} \textbf{158} (2003), 323-343.

\bibitem{ChangQingI}
S.-Y.~A. Chang and J.~Qing,
\textit{The zeta functional determinants on manifolds with boundary. I. The formula},
J. Funct. Anal. \textbf{147} (1997), 327--362.

\bibitem{ChangQingII}
S.-Y.~A. Chang and J.~Qing,
\textit{The zeta functional determinants on manifolds with boundary. II. Extremal metrics and compactness
	of isospectral set},
J. Funct. Anal. \textbf{147} (1997), 363--399.

\bibitem{ChangYangAnnals95}
S.-Y.~A. Chang and P.~C. Yang,
\textit{Extremal metrics of zeta function determinants on $4$-manifolds},
Ann. of Math. (2) \textbf{142} (1995), 171--212.

\bibitem{ChangYangS2}
S.-Y.~A. Chang and P.~C. Yang,
\textit{Prescribing Gaussian curvature on $\mathbb S^{2}$},
Acta Math. \textbf{159} (1987), 215--259.

\bibitem{CruzDeLaTorrePreprint}
S.~Cruz-Bl\'azquez and A.~DeLaTorre,
\textit{Conformal metrics on the four-dimensional half sphere with symmetric $Q$ and $T$ curvatures},
preprint.

\bibitem{CBRuiz} S. Cruz-Blázquez and D. Ruiz. \textit{Prescribing Gaussian and geodesic curvature on disks.} Adv. Non-linear Stud., 18(3):453–468, 2018.

\bibitem{djadli-malchiodi-Q1}
Z. Djadli and A. Malchiodi,
\textit{Existence of conformal metrics with constant Q-curvature},
\textit{Ann. of Math. (2)} \textbf{168} (2008), no. 3, 813–858.

\bibitem{Hamza}
H.~Hamza,
\textit{Sur les transformations conformes des vari\'et\'es riemanniennes \`a bord},
J. Funct. Anal. \textbf{92} (1990), no.~2, 403--447.

\bibitem{KazdanWarner74}
J.~L. Kazdan and F.~W. Warner,
\textit{Curvature functions for compact $2$-manifolds},
Ann. of Math. (2) \textbf{99} (1974), 14--47.

\bibitem{KazdanWarner75}
J.~L. Kazdan and F.~W. Warner,
\textit{Existence and conformal deformation of metrics with prescribed Gaussian and scalar curvatures},
Ann. of Math. (2) \textbf{101} (1975), 317--331.

\bibitem{LiLiLiu}
J. Li, Y. Li and P. Liu 
\textit	{The curvature on a $4$-dimensional Riemannian manifold  with$\int_M Q\,d V_g=8\pi^2$}
\textit {	Advances in Mathematics}
\textbf{ 231}, Issues 3–4, October–November 2012, 2194-2223.

\bibitem{LSRRS}
R. López-Soriano, F.J. Reyes-Sánchez and D. Ruiz,
\textit{Conformal Metrics on the Disk with Prescribed Negative Gaussian Curvature and Boundary Geodesic Curvature}, Preprint arxiv.org/abs/2602.15471.

\bibitem{malchiodi-struwe2006}
A. Malchiodi and M. Struwe,
\textit{Q-curvature flow on S4},
\textit{J. Differential Geom.} \textbf{73} (2006), no. 1, 1–44.

\bibitem{ndiaye2008}
C.B. Ndiaye,
\textit{Conformal metrics with constant Q-curvature for manifolds with boundary},
\textit{Commun. Anal. Geom.} \textbf{16} (2008), no. 5, 1049–1124.

\bibitem{ndiaye2009}
C.B. Ndiaye,
\textit{Constant T-Curvature conformal metrics on 4-manifolds with boundary},
\textit{Pacific J. Math.} \textbf{240} (2009), 151-184.

\bibitem{ndiaye2011}
C.B. Ndiaye,
\textit{Q-Curvature flow on 4 manifolds with boundary},
\textit{Math. Z.} \textbf{269} (2011), 83-114.

\bibitem{Paneitz}
S.~Paneitz,
\textit{A quartic conformally covariant differential operator for arbitrary pseudo-Riemannian manifolds}
(summary),
SIGMA \textbf{4} (2008), 036.

\bibitem{ruiz24} D. Ruiz. \textit{Conformal metrics of the disk with prescribed Gaussian and geodesic curvatures}. Math. Ann., 390(1):1049–1075, 2024.

\bibitem{struwe} M. Struwe, \textit{The prescribed curvature flow on the disc},
Ars Inveniendi Analytica (2024), Paper No. 5, 59 pp. 

\bibitem{wei-xu1998}
J. Wei and X. Xu,
\textit{On conformal deformations of metrics on $S^n$},
\textit{J. Funct. Anal.} \textbf{157} (1998), no. 1, 292-325.

\end{thebibliography}
\end{document}